\def\N{\mathbb N}
\def\Z{\mathbb Z}
\newtheorem{thm}{Theorem}[section]
\newtheorem{lemm}[thm]{Lemma}
\newtheorem{prop}[thm]{Proposition}
\theoremstyle{remark}
\theoremstyle{definition}
\title{Density of minimal hypersurfaces for generic metrics}
\author{Kei Irie, Fernando C. Marques and Andr\'e Neves}
\address{Research Institute for Mathematical Sciences \\ Kyoto University \\ Kyoto 606-8502 \\ Japan / Simons Center for Geometry and Physics \\ State University of New York \\ Stony Brook  NY 11794-3636 \\ USA}
\email{iriek@kurims.kyoto-u.ac.jp}
\address{Princeton University \\ Fine Hall \\ Princeton NJ 08544 \\ USA}
\email{coda@math.princeton.edu}
\address{University of Chicago \\ Department of Mathematics \\ Chicago IL 60637\\ USA /Imperial College London\\ Huxley Building \\ 180 Queen's Gate \\ London SW7 2RH \\ United Kingdom}
\email{aneves@uchicago.edu, a.neves@imperial.ac.uk}
\thanks{The first author is supported by JSPS Postdoctoral
Fellowships for Research Abroad. The second author is partly supported by NSF-DMS-1509027 and NSF DMS-1311795. The third author is partly supported by NSF  DMS-1710846 and EPSRC Programme Grant EP/K00865X/1.}
\begin{document}
\maketitle

\begin{abstract}
For almost all Riemannian metrics (in the $C^\infty$ Baire sense) on a closed manifold $M^{n+1}$, $3\leq (n+1)\leq 7$, we prove that the union of all closed, smooth, embedded minimal hypersurfaces is dense.
This implies there are infinitely many minimal hypersurfaces thus proving a conjecture of Yau (1982) for generic metrics.

 \end{abstract}

\section{Introduction}

Minimal surfaces are among the most extensively studied objects in Differential Geometry. There is a wealth of examples for many particular ambient spaces, but  their general existence theory in Riemannian manifolds is still rather mysterious. A motivating conjecture has been:

\medskip

{\bf Conjecture} (Yau \cite{yau1}, 1982): \textit{Every closed Riemannian three-manifold contains infinitely many smooth, closed, immersed minimal surfaces.}

\medskip

In this paper we settle the generic case, and  in fact prove that a much stronger property holds true:  there are infinitely many closed embedded minimal hypersurfaces intersecting any given ball in $M$.

\medskip

{\bf Main Theorem}: \textit{Let $M^{n+1}$ be a closed manifold of dimension $(n+1)$, with $3\leq (n+1)\leq 7$. Then for a $C^\infty$-generic Riemannian metric $g$ on $M$, the union of all closed, smooth, embedded minimal hypersurfaces is dense.}
\medskip

Besides some specific metrics (e.g. \cite{lawson}), the existence of infinitely many closed, smooth, embedded minimal hypersurfaces  was only known   for manifolds of positive Ricci curvature  $M^{n+1}$, $3 \leq (n+1) \leq 7$, as proven by the last two authors in \cite{marques-neves-infinitely}.  Before that the best result was due to Pitts (1981, \cite{pitts}), who built on earlier work of Almgren (\cite{almgren-varifolds})
to prove there is  at least one closed embedded minimal hypersurface. In \cite{marques-neves-infinitely} it was shown the existence of at least $(n+1)$ such hypersurfaces.

The main ingredient in the proof of our Main Theorem is  the Weyl law for the volume spectrum conjectured by Gromov (\cite{gromov}) and recently proven by the last two authors jointly with Liokumovich in \cite{liokumovich-marques-neves}. We  need the Morse index estimates proven by the last two authors in \cite{marques-neves-index}, for minimal hypersurfaces constructed by min-max methods. And we use the Structure Theory of White (\cite{white2}, \cite{white}), who proved that a generic metric is ``bumpy'', meaning that every closed minimal hypersurface  is  nondegenerate. Finally, we use an idea of the first author (\cite{irie}) who proved an analogous density result for closed geodesics (not necessarily embedded) in surfaces. The argument of  \cite{irie} is based on a different kind of asymptotic law, involving spectral invariants in Embedded Contact Homology (\cite{cristofaro-hutchings-ramos}).

The volume spectrum of a compact Riemannian manifold $(M^{n+1},g)$ is a nondecreasing sequence of numbers $\{\omega_k(M,g):k\in \mathbb{N}\}$ defined variationally by performing a min-max procedure for the area (or $n$-dimensional volume) functional over  multiparameter sweepouts. The first estimates for these numbers were proven by Gromov in the late 1980s \cite{gromov0} (see also Guth \cite{guth}). 

The main result of \cite{liokumovich-marques-neves} used in this paper is:

\medskip

{\bf Weyl Law for the Volume Spectrum} (Liokumovich, Marques, Neves, 2016): {\it There exists a universal constant $a(n)>0$ such that for any compact Riemannian manifold $(M^{n+1},g)$ we have:
$$
\lim_{k\rightarrow \infty} \omega_k(M,g)k^{-\frac{1}{n+1}} = a(n) {\rm vol}(M,g)^\frac{n}{n+1}.
$$
}

In \cite{gromov0}, Gromov worked with a definition of $\omega_k(M,g)$ that was slightly different from ours (see Section 2 of this paper). He considered a parametrization of the space of hypersurfaces in $M$ by the space of real functions on $M$, or more precisely by its projectivization. Namely, to a function $f:M\rightarrow \mathbb{R}$ (or to its equivalence class $[f]$) he associated the zero set $f^{-1}(0)\subset M$. In our case, the space of hypersurfaces is the space $\mathcal{Z}_n(M;\Z_2)$ of $n$-dimensional modulo two flat boundaries endowed with the flat topology. This allows us to use the machinery of Geometric Measure Theory. The projectivization of the space of real functions can be identified immediately with $\mathbb{RP}^\infty$, while the fact that $\mathcal{Z}_n(M;\Z_2)$ is weakly homotopically equivalent to $\mathbb{RP}^\infty$ follows from work of Almgren \cite{almgren} (as explained in \cite{marques-neves-topology}). In Gromov's work (\cite{gromov}), $\omega_k$ is defined to be the smallest number such that the set of hypersurfaces with volume less than or equal to $\omega_k$ has ``essential dimension'' (Section 0.3.A, \cite{gromov}) greater than or equal to $k$. 
%For $\mathbb{RP}^\infty$ this is equivalent to the cohomological condition used in our paper. 

Flat chains modulo two of any codimension were crucially used by Guth \cite{guth} in his study of min-max volumes associated with cohomology classes. In our case we restrict to codimension one (in which case the cohomology classes are cup products) and  add the no concentration of mass condition for technical reasons related to Almgren-Pitts min-max theory.

The dimensional restriction in the Main Theorem  is due to the fact that in higher dimensions min-max (even area-minimizing) minimal hypersurfaces can have singular sets. We use Almgren-Pitts theory (\cite{almgren-varifolds}, \cite{pitts}), which together with Schoen-Simon regularity (\cite{schoen-simon}) produces smooth minimal hypersurfaces when $3\leq (n+1) \leq 7$. We expect that the methods of this paper can be generalized to handle the higher-dimensional singular case.

We finish the introduction with some idea of the proof. First we prove  that for each $k\in \mathbb{N}$, the number $\omega_k(M,g)$ is the volume of some smooth, embedded, closed minimal hypersurface, perhaps with integer multiplicities.  The possible presence of integer multiplicities is one of the reasons why constructing distinct minimal hypersurfaces is a difficult problem. 

The main observation is that the Weyl Law for the Volume Spectrum implies a mechanism to create new minimal hypersurfaces by perturbation of the metric. Suppose $g$ is a bumpy metric (a generic property by White) such that no minimal hypersurface for $g$ intersects some nonempty open set $U\subset M$. The fact that $g$ is bumpy implies there can be at most countably many minimal hypersurfaces for $g$. We consider a family of conformal deformations $g(t)=(1+th)g$ for small $t\geq 0$, where $h$ is a nonzero nonnegative function with support contained in $U$. Because the volume of $M$ goes up strictly with $t$, the Weyl Law for the Volume Spectrum tells us that for any $t>0$ some $k$-width $\omega_k$ will satisfy $\omega_k(g(t))>\omega_k(g)$, and therefore $\omega_k$ assumes uncountably many values. Because $g(t)=g$ outside $U$, for some $g(t')$, $t'>0$, there must be a minimal hypersurface that intersects $U$. Hence by perturbing $g$ to $g(t')$ we have kept all the  minimal hypersurfaces for $g$ intact but gained a new one that intersects $U$.

\section{Preliminaries}

We denote by $\mathcal{Z}_n(M;\Z_2)$ the space of modulo two $n$-dimensional flat chains $T$ in $M$  with $T=\partial U$ for some $(n+1)$-dimensional modulo two flat chain $U$ in $M$, endowed with the flat topology. This space is weakly homotopically equivalent to $\mathbb{RP}^\infty$ (see Section 4 of \cite{marques-neves-topology}). We denote by $\overline{\lambda}$ the generator of $H^1(\mathcal{Z}_n(M;\Z_2), \mathbb{Z}_2)=\mathbb{Z}_2$. The mass ($n$-dimensional volume) of $T$ is denoted by $M(T)$.

Let $X$ be a finite dimensional simplicial complex. A continuous map $\Phi:X\rightarrow \mathcal{Z}_n(M;\Z_2)$ is called a {\em  $k$-sweepout} if
$$
\Phi^*(\bar \lambda^k) \neq 0 \in H^k(X;\Z_2).
$$
We say $X$ is {\em $k$-admissible} if there exists  a $k$-sweepout $\Phi:X\rightarrow \mathcal{Z}_n(M;\Z_2)$  that has no concentration of mass, meaning 
$$\lim_{r\to 0} \sup\{M(\Phi(x) \cap B_r(p)):x\in X, p\in M\}=0.$$
The set of all $k$-sweepouts $\Phi$ that have no concentration of mass is denoted by $\mathcal P_k$. Note that two maps in  $\mathcal P_k$ can have different domains.

In \cite{marques-neves-infinitely}, the last two authors  defined
\medskip

{\bf Definition:} The {\em $k$-width of $(M,g)$}  is the number
$$\omega_k(M,g)=\inf_{\Phi \in \mathcal P_k}\sup\{M(\Phi(x)): x\in {\rm dmn}(\Phi)\},$$
where ${\rm dmn}(\Phi)$ is the domain of $\Phi$.

As remarked in the Introduction, this is a variation of a definition of Gromov (Section 4.2.B, p. 179, \cite{gromov0}).

\begin{lemm}\label{continuity}
The $k$-width $\omega_k(M,g)$ depends continuously on the metric $g$ (in the $C^0$ topology).
\end{lemm}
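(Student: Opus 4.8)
The plan is to reduce everything to the elementary fact that $C^0$-close metrics have uniformly comparable mass functionals, with the comparison constants tending to $1$.

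Suppose $g'$ is another metric on $M$ with $(1-\varepsilon)\,g \le g' \le (1+\varepsilon)\,g$ as quadratic forms, $\varepsilon\in(0,1)$. First I would record the pointwise fact that then, at every point of $M$ and on every $n$-dimensional subspace $V$ of the tangent space, the induced $n$-volume densities satisfy $(1-\varepsilon)^{n/2}\,d\vol_g \le d\vol_{g'} \le (1+\varepsilon)^{n/2}\,d\vol_g$ on $V$: writing $g'$ in a $g$-orthonormal basis of $V$ produces a symmetric positive-definite matrix whose eigenvalues lie in $[1-\varepsilon,1+\varepsilon]$ (by the min-max characterization of eigenvalues), so its determinant lies in $[(1-\varepsilon)^n,(1+\varepsilon)^n]$ and the volume ratio is its square root. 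Integrating over rectifiable representatives yields, for every $T\in\mathcal{Z}_n(M;\Z_2)$,
$$(1-\varepsilon)^{n/2}\,M_g(T)\ \le\ M_{g'}(T)\ \le\ (1+\varepsilon)^{n/2}\,M_g(T),$$
and the same comparison, with exponent $(n+1)/2$, holds for the masses of $(n+1)$-dimensional flat chains.

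Next I would argue that the admissible class $\mathcal P_k$ does not depend on the metric. Since the masses of $n$- and $(n+1)$-dimensional chains are comparable for $g$ and $g'$, the associated flat norms on $\mathcal{Z}_n(M;\Z_2)$ are bi-Lipschitz equivalent and hence induce the same topology, and the $g$- and $g'$-balls in $M$ are comparable; therefore continuity of a map $\Phi\colon X\to\mathcal{Z}_n(M;\Z_2)$, the cohomological condition $\Phi^*(\bar\lambda^k)\neq 0$, and the no-concentration-of-mass condition are all unaffected by replacing $g$ with $g'$, so $\mathcal P_k$ is literally the same set of maps for the two metrics. Taking the infimum over this common class in the definition of $\omega_k$ and using the mass comparison termwise gives
$$(1-\varepsilon)^{n/2}\,\omega_k(M,g)\ \le\ \omega_k(M,g')\ \le\ (1+\varepsilon)^{n/2}\,\omega_k(M,g).$$

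To conclude, given $g_i\to g$ in the $C^0$ topology, for each $\varepsilon>0$ compactness of $M$ and positive-definiteness of $g$ give $(1-\varepsilon)\,g\le g_i\le(1+\varepsilon)\,g$ for all large $i$, so the two-sided bound forces $\omega_k(M,g_i)\to\omega_k(M,g)$, which is the asserted $C^0$-continuity (indeed with the explicit modulus above). I do not anticipate a genuine obstacle here; the one point that needs a little care is the metric-independence of $\mathcal P_k$ — that comparable masses produce equivalent flat topologies and the same no-concentration class — but this is routine. In fact any two smooth metrics on the compact manifold $M$ are mutually comparable, so $\mathcal P_k$ is the same set for every metric, and all the real work is done by the mass comparison, whose constants tend to $1$.
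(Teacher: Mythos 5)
Your proof is correct and takes essentially the same approach as the paper: both rest on the pointwise comparison of $n$-dimensional volume densities, giving $M_{g'}(T)\le \bigl(\sup_{v\neq 0} g'(v,v)/g(v,v)\bigr)^{n/2} M_g(T)$, combined with the observation that membership in $\mathcal P_k$ does not depend on the metric. The only cosmetic difference is that you state the two-sided bound on the infimum directly, while the paper picks a near-optimal sweepout for $g$ and treats the $\limsup$ and $\liminf$ separately.
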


\begin{proof}
Suppose $g_i$ is a sequence of smooth Riemannian metrics that converges to $g$ in the $C^0$ topology. Given $\varepsilon >0$, let  $\Phi:X\rightarrow \mathcal{Z}_n(M;\Z_2)$ be a $k$-sweepout of $M$ that has no concentration of mass (this condition does not depend on the metric) and such that 
$$
\sup\{M_{g}(\Phi(x)): x \in X\} \leq \omega_k(M,g) + \varepsilon,
$$
where $M_g(T)$ is the mass of $T$ with respect to $g$.

Since 
\begin{eqnarray*}
\omega_k(M,g_i) &\leq& \sup\{M_{g_i}(\Phi(x)): x \in X\} \\
&\leq& (\sup_{v \neq 0} \frac{g_i(v,v)}{g(v,v)})^\frac{n}{2}\sup\{M_{g}(\Phi(x)): x \in X\}\\
&\leq&  (\sup_{v \neq 0} \frac{g_i(v,v)}{g(v,v)})^\frac{n}{2}(\omega_k(M,g) + \varepsilon) ,
\end{eqnarray*}
and $\varepsilon >0$ is arbitrary, we get $
\limsup_{i \rightarrow \infty} \omega_k(M,g_i) \leq \omega_k(M,g).$
Similarly, one can prove
$
\liminf_{i \rightarrow \infty} \omega_k(M,g_i) \geq \omega_k(M,g).
$

\end{proof}

The proof of the next  Proposition is essentially contained in Section 1.5 of \cite{marques-neves-index}, but we prove it here for the sake of completeness. It follows from the index estimates of the last two authors (\cite{marques-neves-index}) and a compactness theorem of Sharp (\cite{sharp}).
\begin{prop}\label{invariant.can.be.realized}
Suppose $3\leq (n+1) \leq 7$. Then for each $k\in \mathbb{N}$, there exist a finite disjoint collection $\{\Sigma_1,\dots,\Sigma_N\}$ of closed, smooth, embedded minimal hypersurfaces in $M$, and integers $\{m_1,\dots,m_N\} \subset \mathbb{N}$, such that
$$
\omega_k(M,g) = \sum_{j=1}^N m_j {\rm vol}_g(\Sigma_j),
$$
and
$$
\sum_{j=1}^N {\rm index}(\Sigma_j) \leq k.
$$
\end{prop}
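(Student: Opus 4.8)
The plan is to realize $\omega_k(M,g)$ via Almgren--Pitts min-max theory applied to an optimal $k$-sweepout, and then control the index and multiplicity using the results of Marques--Neves and Sharp. First I would recall (from Almgren--Pitts, Pitts, and the Schoen--Simon regularity theorem valid for $3\leq (n+1)\leq 7$) that for any $k$ there exists a stationary integral varifold $V=\sum_{j=1}^N m_j |\Sigma_j|$, whose support is a disjoint union of closed, smooth, embedded minimal hypersurfaces $\Sigma_1,\dots,\Sigma_N$ with positive integer multiplicities $m_j$, such that its mass equals $\omega_k(M,g)$, i.e. $\omega_k(M,g)=\sum_{j=1}^N m_j \,{\rm vol}_g(\Sigma_j)$. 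This is the Almgren--Pitts min-max construction run on a minimizing sequence $\Phi_i\in\mathcal P_k$ with $\sup_{x}M(\Phi_i(x))\to\omega_k(M,g)$; the no-concentration-of-mass hypothesis built into $\mathcal P_k$ is exactly what is needed to apply the pull-tight and combinatorial arguments of Pitts. The existence part of the Proposition is therefore essentially the content of \cite{pitts} combined with \cite{schoen-simon}.

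The substantive point is the index bound $\sum_{j=1}^N {\rm index}(\Sigma_j)\leq k$. Here I would invoke the index estimates of \cite{marques-neves-index}: if $X$ is a $k$-admissible complex and $\Phi\in\mathcal P_k$ is (almost) optimal, then the min-max procedure over the $k$-parameter family produces a minimal hypersurface whose total Morse index, counted with multiplicity in the appropriate sense, is at most the number of parameters, namely $k$. More precisely, one approximates $\omega_k(M,g)$ by a sequence of $k$-sweepouts $\Phi_i$ defined on $k$-dimensional admissible complexes $X_i$ with widths tending to $\omega_k(M,g)$, applies the min-max theorem of \cite{marques-neves-index} to each to obtain minimal hypersurfaces $V_i$ with ${\rm index}(\operatorname{supp}V_i)\leq k$ and mass converging to $\omega_k(M,g)$, and then passes to a limit.

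To pass to the limit I would use Sharp's compactness theorem \cite{sharp}: a sequence of closed embedded minimal hypersurfaces in $M^{n+1}$, $3\leq (n+1)\leq 7$, with uniformly bounded area and uniformly bounded index subconverges, in the varifold sense and smoothly away from finitely many points, to a smooth embedded minimal hypersurface (with integer multiplicities), and the index of the limit is at most the liminf of the indices. Applying this to the $V_i$ produces the desired limiting varifold $V=\sum_j m_j|\Sigma_j|$: its mass is $\omega_k(M,g)$ by continuity of mass under the convergence, its support consists of finitely many disjoint closed smooth embedded minimal hypersurfaces, and by the lower semicontinuity of the index in Sharp's theorem we get $\sum_{j=1}^N {\rm index}(\Sigma_j)\leq \liminf_i {\rm index}(\operatorname{supp}V_i)\leq k$.

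The main obstacle, and the step requiring the most care, is the bookkeeping in the index estimate: one must ensure that the index bound from \cite{marques-neves-index} is genuinely a bound on the sum of the indices of the \emph{distinct} components of the support (not, say, a bound weighted by multiplicities, nor merely a bound on the index of a single component), and that this bound survives the limiting process. This is where the precise statement of the Marques--Neves index theorem and the precise form of Sharp's compactness --- in particular that both are stated for the index of the \emph{underlying minimal hypersurface} rather than the varifold with multiplicity --- must be matched up carefully; the multiplicities $m_j$ play no role in the index count, which is precisely the phenomenon that makes distinguishing minimal hypersurfaces delicate. Everything else is a routine assembly of cited results.
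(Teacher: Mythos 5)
Your proposal is correct and follows essentially the same route as the paper: a minimizing sequence of $k$-sweepouts, the index bound of Theorem 1.2 of \cite{marques-neves-index} applied to the homotopy class of each sweepout, and Sharp's compactness theorem \cite{sharp} (with the monotonicity formula giving uniform bounds on the number of components and multiplicities) to extract the limit collection. The only steps the paper makes explicit that you elide are the reduction to the $k$-dimensional skeleton (via the vanishing of $H^k(X_i,X_i^{(k)};\Z_2)$), the interpolation to ${\bf F}$-continuity needed to form the homotopy class $\Pi_i$, and the inequality $\omega_k(M,g)\leq {\bf L}(\Pi_i)$ (valid because every competitor in $\Pi_i$ is still a $k$-sweepout with no concentration of mass), which is what guarantees that the limit varifold has mass exactly $\omega_k(M,g)$ rather than merely at most $\omega_k(M,g)$.
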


\begin{proof}
Choose a sequence $\{\Phi_i\}_{i\in\N}\subset  \mathcal{P}_k$ such that
$$
\lim_{i \rightarrow \infty} \sup\{M(\Phi_i(x)): x\in X_i={\rm dmn}(\Phi_i)\} = \omega_k(M,g).
$$
Denote by $X_i^{(k)}$ the $k$-dimensional skeleton of $X_i$. Then $H^k(X_i,X_i^{(k)};\Z_2)=0$ and hence the long exact cohomology sequence gives that the natural pullback map from $H^k(X_i;\Z_2)$ into $H^k(X_i^{(k)};\Z_2)$ is injective. This implies  ${(\Phi_i)}_{|X_i^{(k)}}\in \mathcal{P}_k$. The definition of $\omega_k$ then implies 
$$
\lim_{i \rightarrow \infty} \sup\{M(\Phi_i(x)): x\in X_i^{(k)}\} = \omega_k(M,g).
$$
The interpolation machinery developed by the last two authors (\cite{marques-neves-infinitely}, item (ii) of Corollary 3.12)   implies that we can suppose $\Phi_i:X_i^{(k)} \rightarrow \mathcal{Z}_n(M,\mathbb{Z}_2)$ is continuous in the ${\bf F}$-metric (see Section 2.1 of \cite{marques-neves-infinitely}) for every $i$.

We denote by $\Pi_i$ the homotopy class of $\Phi_i$ as defined in \cite{marques-neves-index}. This is the class of all maps
$\Phi_i': X_i^{(k)} \rightarrow \mathcal{Z}_n(M,\mathbb{Z}_2)$, continuous in the ${\bf F}$-metric, that are homotopic to $\Phi_i$ in the flat topology. In particular, $(\Phi_i')^*(\overline{\lambda}^k) = \Phi_i^*(\overline{\lambda}^k)$. Continuity in the ${\bf F}$-metric implies no concentration of mass, hence every such $\Phi_i'$ is also a $k$-sweepout.

Therefore the min-max number (defined in  Section 1 of \cite{marques-neves-index})
$$
 {\bf L}(\Pi_i) = \inf_{\Phi_i' \in \Pi_i}\sup_{x\in X_i^{(k)}}\{{\bf M}(\Phi_i'(x))\}
 $$
satisfies
$$
 \omega_k(M,g) \leq {\bf L}(\Pi_i) \leq  \sup\{M(\Phi_i(x)): x\in X_i^{(k)}\}
$$
and in particular $$\lim_{i \rightarrow \infty} {\bf L}(\Pi_i) = \omega_k(M,g).$$

Theorem 1.2 of \cite{marques-neves-index} now implies the existence of a finite disjoint collection $\{\Sigma_{i,1},\dots,\Sigma_{i,N_i}\}$ of closed, smooth, embedded minimal hypersurfaces in $M$, and integers $\{m_{i,1},\dots,m_{i,N_i}\} \subset \mathbb{N}$, such that
$$
 {\bf L}(\Pi_i) = \sum_{j=1}^{N_i} m_{i,j} {\rm vol}_g(\Sigma_{i,j}),
$$
and
$$
\sum_{j=1}^{N_i} {\rm index}(\Sigma_{i,j}) \leq k.
$$

The monotonicity formula for minimal hypersurfaces in Riemannian manifolds implies that there exists  $\delta>0$, depending only on $M$, such that the volume of any closed minimal hypersurface is greater than or equal to $\delta$. Hence the number of components $N_i$ and the multiplicities $m_{i,j}$ are uniformly bounded. The Compactness Theorem of Sharp (Theorem 2.3 of \cite{sharp}) implies that there exists a finite disjoint collection $\{\Sigma_1,\dots,\Sigma_N\}$ of closed, smooth, embedded minimal hypersurfaces in $M$, satisfying
$$
\sum_{j=1}^N {\rm index}(\Sigma_j) \leq k,
$$
and integers $\{m_1,\dots,m_N\} \subset \mathbb{N}$ such that, after passing to a subsequence,
$$
\sum_{j=1}^{N_i} m_{i,j} \cdot \Sigma_{i,j} \rightarrow \sum_{j=1}^N m_j \cdot \Sigma_j
$$
as varifolds. Hence $\omega_k(M,g) = \sum_{j=1}^N m_j {\rm vol}_g(\Sigma_j)$, and the proof of the proposition is finished.

\end{proof}

\begin{prop}\label{perturbation}
Let $\Sigma$ be a closed, smooth, embedded minimal hypersurface in $(M^{n+1},g)$. Then there exists a sequence of metrics $g_i$ on $M$, $i\in \mathbb{N}$, converging to $g$ in the smooth topology such that $\Sigma$ is a nondegenerate
minimal hypersurface in $(M^{n+1},g_i)$ for every $i$.
\end{prop}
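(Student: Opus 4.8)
The plan is to produce the metrics $g_i$ by small conformal perturbations of $g$ supported in a tubular neighborhood of $\Sigma$, chosen so that $\Sigma$ stays minimal while its Jacobi operator is shifted by a nonzero constant times the perturbation parameter. Concretely, fix a tubular neighborhood $\mathcal{N}$ of $\Sigma$ on which $\rho:=\dist_g(\cdot,\Sigma)^2$ is smooth, pick $\eta\in C^\infty_c(\mathcal{N})$ with $\eta\equiv 1$ near $\Sigma$, and set $\psi:=\rho\,\eta\in C^\infty(M)$, extended by zero outside $\mathcal{N}$. Then $\psi\geq 0$, both $\psi$ and $d\psi$ vanish on $\Sigma$, and along $\Sigma$ one has $\hess\psi=2\,\nu^\flat\otimes\nu^\flat$ (here $\hess$ and $\nabla$ are with respect to $g$, and $\nu$ is a local $g$-unit normal), so in particular $\hess\psi(\nu,\nu)=\Delta_g\psi=2$ on $\Sigma$. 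For $t$ in a neighborhood of $0$ set $g_t:=(1+t\psi)g=e^{2u_t}g$ with $u_t:=\tfrac12\log(1+t\psi)$; these are smooth metrics with $g_t\to g$ in $C^\infty$ as $t\to 0$.

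Next I would check that $\Sigma$ is still minimal for $g_t$ and track the change of its Jacobi operator. Since $u_t=du_t=0$ on $\Sigma$, the conformal transformation law for the mean curvature gives $H_{g_t}=e^{-u_t}(H_g+n\,\partial_\nu u_t)=0$ on $\Sigma$, so $\Sigma$ is $g_t$-minimal. Writing the Jacobi operator of a minimal $\Sigma$ in a metric $h$ as $L_h=\Delta_{h|_\Sigma}+|A_h|_h^2+\ric_h(\nu,\nu)$ (so that $\Sigma$ is nondegenerate in $h$ iff $\ker L_h=0$), I would observe: (i) since $u_t\equiv 0$ on all of $\Sigma$ we have $g_t|_\Sigma=g|_\Sigma$, hence $\Delta_{g_t|_\Sigma}=\Delta_{g|_\Sigma}$; (ii) the conformal transformation of the second fundamental form, together with $u_t=du_t=0$ on $\Sigma$, gives $A_{g_t}=A_g$ and hence $|A_{g_t}|_{g_t}^2=|A_g|_g^2$ on $\Sigma$; (iii) the conformal transformation law $\ric_{\tilde g}=\ric_g-(n-1)(\hess u-du\otimes du)-(\Delta_g u+(n-1)|\nabla u|^2)g$ for $\tilde g=e^{2u}g$, evaluated on $\Sigma$ using $u_t=du_t=0$ and $\hess u_t(\nu,\nu)=\Delta_g u_t=t$ there, gives $\ric_{g_t}(\nu,\nu)=\ric_g(\nu,\nu)-(n-1)t-t=\ric_g(\nu,\nu)-nt$ on $\Sigma$. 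Combining (i)--(iii) yields $L_{g_t}=L_g-nt$ as operators on $C^\infty(\Sigma)$.

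Since $\Sigma$ is closed, $L_g$ is an elliptic self-adjoint operator with discrete spectrum $\{\mu_j\}_{j\in\N}$, so $L_{g_t}$ has nontrivial kernel exactly when $t\in\{\mu_j/n:j\in\N\}$, which is a countable set. Choosing any sequence $t_i\to 0$ avoiding this set and putting $g_i:=g_{t_i}$ gives metrics converging to $g$ in $C^\infty$ for which $\Sigma$ is a nondegenerate minimal hypersurface, which proves the proposition.

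I expect the bulk of the work to be the second step: verifying, for this particular conformal perturbation, that all three ingredients of the Jacobi operator are controlled along $\Sigma$ and that the net effect is precisely the shift by the nonzero constant $-nt$. The mechanism is that the second-order vanishing of $\psi$ along $\Sigma$ simultaneously preserves minimality and freezes $g|_\Sigma$, $|A|^2$ and $\Delta_{g|_\Sigma}$, while still making $\hess u_t(\nu,\nu)$ and $\Delta_g u_t$ nonzero on $\Sigma$. A minor additional point is the possibly non-orientable case, where $\nu$ is not globally defined; but $\psi$, $|A|^2$, $\ric(\nu,\nu)$ and the computation above do not depend on the sign of the local unit normal, and $\rho$ is globally smooth near $\Sigma$, so the argument goes through unchanged.
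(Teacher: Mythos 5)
Your proposal is correct and follows essentially the same route as the paper: a conformal factor vanishing to second order along $\Sigma$ (built from $\eta\cdot\dist_g(\cdot,\Sigma)^2$), which preserves minimality, $g|_\Sigma$, and $|A|^2$ while shifting $\mathrm{Ric}(\nu,\nu)$, hence the Jacobi operator, by a nonzero constant. The only cosmetic difference is that the paper concludes by noting the shifted spectrum misses $0$ for all large $i$, whereas you exclude a countable set of parameters $t$; both are immediate from discreteness of the spectrum.
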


\begin{proof}
If $\tilde{g}=\exp(2\phi)g$, then the second fundamental form of $\Sigma$ with respect to $\tilde{g}$ is given by (Besse \cite{besse}, Section 1.163)
\begin{eqnarray*}\label{second.fundamental.form}
A_{\Sigma, \tilde{g}} =  A_{\Sigma,g} -  g \cdot (\nabla \phi)^\perp,
\end{eqnarray*}
where $(\nabla \phi)^\perp(x)$ is the component of $\nabla \phi$ normal to $T_x\Sigma$.  The Ricci curvatures are related by (see Besse \cite{besse}, Theorem 1.159):
\begin{eqnarray*}\label{ricci.curvature}
Ric_{\tilde{g}} = Ric_{g}-(n-1)(Hess_g\phi-d\phi \otimes d\phi) -(\Delta_g \phi +(n-1) |\nabla \phi|^2)\cdot g.
\end{eqnarray*}

Suppose both $\phi$ and $\nabla \phi$ vanish on $\Sigma$. Then $\tilde{g}_{|\Sigma} = g_{|\Sigma}$ and $A_{\Sigma, \tilde{g}}=A_{\Sigma,g}$. In particular, $\Sigma$ is also minimal with respect to $\tilde{g}$ and $|A_{\Sigma,\tilde{g}}|_{\tilde{g}}^2=|A_{\Sigma,g}|_g^2$. A unit normal $N$ to $\Sigma$ with respect to $g$ is also a unit normal to $\Sigma$ with respect to $\tilde{g}$ and
$$
Ric_{\tilde{g}}(N,N) = Ric_g(N,N)-(n-1) Hess_g\phi(N,N)-\Delta_g\phi.
$$
Since $\nabla \phi=0$ on $\Sigma$, we have $\Delta_g\phi = Hess_g\phi(N,N)$ on $\Sigma$ and therefore
$$
Ric_{\tilde{g}}(N,N) = Ric_g(N,N)-n Hess_g\phi(N,N).
$$

%But, if $\{e_j\}$ denotes a local orthonormal frame on $\Sigma$, we have
%\begin{eqnarray*}
%\Delta_g\phi = Hess_g \phi(N,N) + \sum_j  g(\nabla_{e_j}\nabla \phi, e_j) = Hess_g \phi(N,N) 
%\end{eqnarray*}
%on $\Sigma$. Therefore
%$$
%Ric_{\tilde{g}}(N,N) = Ric_g(N,N)-n Hess_g\phi(N,N).
%$$

Let $\eta:M \rightarrow \mathbb{R}$ be a smooth function such that is equal to 1 in $V_\delta(\Sigma)$ and equal to zero in $M \setminus V_{2\delta}(\Sigma)$, where $V_r(\Sigma) = \{x\in M: d_g(x,\Sigma) \leq r\}$. We choose $\delta>0$ sufficiently small so that the function $x \mapsto d_g(x,\Sigma)^2$ is smooth in $V_{3\delta}(\Sigma)$. We define 
$h(x) = \eta(x) d_g(x,\Sigma)^2$ for $x\in V_{3\delta}(\Sigma)$ and $h(x)=0$ for $x \in M \setminus V_{3\delta}(\Sigma)$, so 
$h:M \rightarrow \mathbb{R}$ is a smooth function  that coincides with $x \mapsto d_g(x,\Sigma)^2$ in some small neighborhood of $\Sigma$.

Let $g_i = \exp(2\phi_i)g$, where $\phi_i=\frac{1}{i}h$. Since $h(x)=d_g(x,\Sigma)^2$ in a neighborhood of $\Sigma$, we have that, on $\Sigma$, $\phi_i=0$, $\nabla \phi_i=0$ and $Hess_g \phi_i(N,N) = \frac{2}{i}$, and $\Sigma$ is minimal with respect to $g_i$.
%\begin{eqnarray*}
%&&\phi_i=0,\\
%&&\nabla \phi_i=0,\\
%&& Hess_g \phi_i(N,N) = \frac{2}{i},
%\end{eqnarray*}

Therefore
$$
Ric_{g_i}(N,N) + |A_{\Sigma,g_i}|_{g_i}^2 = Ric_g(N,N) + |A_{\Sigma,g}|_g^2  -\frac{2n}{i}.
$$

The Jacobi operator acting on normal vector fields is given by the expression
$$
L_{\Sigma,g}(X) = \Delta_{\Sigma,g}^\perp X + (Ric_g(N,N)+ |A_{\Sigma,g}|_g^2) X.
$$
Since  ${g_i}_{|\Sigma} = g_{|\Sigma}$, we have $\Delta_{\Sigma,g_i}^\perp X = \Delta_{\Sigma,g}^\perp X$ and hence
$$
L_{\Sigma,g_i}(X) =L_{\Sigma,g}(X)-\frac{2n}{i}X.
$$

We conclude that
$$
{\rm spec\,}(L_{\Sigma,g_i}) = {\rm spec\,}(L_{\Sigma,g}) +\frac{2n}{i}.
$$
Hence $\Sigma$ is nondegenerate with respect to $g_i$ for every sufficiently large $i$.
\end{proof}

\section{Proof of the Main Theorem}

We denote by $\mathcal{M}$ the space of all smooth Riemannian metrics on $M$, endowed with the $C^\infty$ topology.

\begin{prop}\label{open.dense}
Suppose $3\leq (n+1) \leq 7$, and let $U \subset M$ be a nonempty open set. Then the set $\mathcal{M}_U$ of all smooth Riemannian metrics on $M$ such that there exists a nondegenerate, closed, smooth, embedded, minimal hypersurface $\Sigma$ that intersects $U$ is open and dense in the $C^\infty$ topology.
\end{prop}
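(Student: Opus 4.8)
The plan is to prove openness and density separately, drawing on the ingredients already assembled in the paper. \emph{Openness.} Let $g\in\mathcal{M}_U$ and choose a nondegenerate, closed, smooth, embedded minimal hypersurface $\Sigma$ for $g$ with $\Sigma\cap U\neq\emptyset$. Since $L_{\Sigma,g}$ has trivial kernel, the implicit function theorem (as in White's structure theory \cite{white2}) yields a $C^\infty$-neighborhood $\mathcal{W}$ of $g$ and, for each $g'\in\mathcal{W}$, a minimal hypersurface $\Sigma_{g'}$ for $g'$ which is a small normal graph over $\Sigma$, embedded, still nondegenerate (its Jacobi operator is close to $L_{\Sigma,g}$), with $\Sigma_g=\Sigma$. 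As $U$ is open and $\Sigma_{g'}\to\Sigma$ in $C^0$ when $g'\to g$, after shrinking $\mathcal{W}$ we keep $\Sigma_{g'}\cap U\neq\emptyset$, so $\mathcal{W}\subset\mathcal{M}_U$.

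\emph{Density.} Fix $g$ and a $C^\infty$-neighborhood $\mathcal{W}$ of it. First I would invoke White's Bumpy Metric Theorem \cite{white} to replace $g$ by a bumpy metric $g_0\in\mathcal{W}$. For $g_0$ every closed minimal hypersurface is nondegenerate and of finite index, so Sharp's Compactness Theorem \cite{sharp} (finitely many of index and area below any given bound) shows $g_0$ has only countably many closed, smooth, embedded minimal hypersurfaces; let $S\subset\R$ be the resulting countable set of all finite sums $\sum_j m_j\,\mathrm{vol}_{g_0}(\Sigma_j)$ with $m_j\in\N$. If some minimal hypersurface of $g_0$ already meets $U$, we are done. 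Otherwise pick $h\geq 0$, $h\not\equiv 0$, with support contained in $U$, and set $g_0(t)=(1+th)g_0$, $t\in[0,\bar t]$, with $\bar t>0$ small enough that $g_0(t)\in\mathcal{W}$ for all $t\in[0,\bar t]$.

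The crux is the claim that some $g_0(t')$, $t'\in(0,\bar t]$, carries a closed, smooth, embedded minimal hypersurface meeting $U$. Suppose not. Then for every $t\in[0,\bar t]$, each closed minimal hypersurface $\Sigma'$ of $g_0(t)$ is disjoint from $U$, hence contained in the open set $M\setminus\mathrm{supp}(h)$ on which $g_0(t)=g_0$; thus $\Sigma'$ is minimal for $g_0$ as well and $\mathrm{vol}_{g_0(t)}(\Sigma')=\mathrm{vol}_{g_0}(\Sigma')$. By Proposition \ref{invariant.can.be.realized} applied to $g_0(t)$, this gives $\omega_k(M,g_0(t))\in S$ for every $k\in\N$ and every $t\in[0,\bar t]$. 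Since $t\mapsto\omega_k(M,g_0(t))$ is continuous by Lemma \ref{continuity} and takes values in the countable set $S$, it is constant, so $\omega_k(M,g_0(\bar t))=\omega_k(M,g_0)$ for all $k$. By the Weyl Law for the Volume Spectrum this forces $\mathrm{vol}(M,g_0(\bar t))=\mathrm{vol}(M,g_0)$, contradicting $\tfrac{d}{dt}\mathrm{vol}(M,g_0(t))>0$. Hence the claim holds, and Proposition \ref{perturbation} lets me perturb $g_0(t')$ within $\mathcal{W}$ to a metric $g_1$ making that hypersurface nondegenerate; it still meets $U$, so $g_1\in\mathcal{M}_U$.

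I expect the only genuinely substantial step to be the claim in the third paragraph: confining the widths $\omega_k(M,g_0(t))$ to the fixed countable set $S$ as long as no minimal hypersurface touches $U$, so that continuity makes them locally constant in $t$ while the Weyl law contradicts the strict growth of volume. The remaining pieces---openness from the implicit function theorem, the reduction to a bumpy metric, and the final desingularization of the new hypersurface---are routine given Lemma \ref{continuity} and Propositions \ref{invariant.can.be.realized} and \ref{perturbation}.
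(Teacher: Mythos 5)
Your proposal is correct and follows essentially the same route as the paper: openness via White's structure theorem, density via bumpy metrics, Sharp's compactness to get a countable set of candidate width values, the conformal deformation $(1+th)g$, continuity of $\omega_k$, and the Weyl law. The only (cosmetic) difference is the order of the contradiction: the paper first fixes a $k$ with $\omega_k(g'(t_0))>\omega_k(g')$ from the Weyl law and then contradicts constancy, whereas you derive constancy of all $\omega_k$ and then use the Weyl law to contradict the strict growth of volume; both are valid.
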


\begin{proof}
Let  $g\in \mathcal{M}_U$ and $\Sigma$ be like in the statement of the proposition. Because $\Sigma$ is nondegenerate, an application of the Inverse Function Theorem implies that for every Riemannian metric $g'$ sufficiently close to $g$, there exists a unique nondegenerate closed, smooth, embedded minimal hypersurface $\Sigma'$ close to $\Sigma$. This follows, for instance, from the Structure Theorem of White (Theorem 2.1 in \cite{white2}) since the nondegeneracy of $\Sigma$ is equivalent to the invertibility of  $D\Pi(g,\Sigma)$ (here $\Pi$ is as  in \cite{white2}). In particular,   $\Sigma'\cap U \neq \emptyset$ if $g'$ is sufficiently close to $g$.  This implies $\mathcal{M}_U$ is open.

It remains to show the set $\mathcal{M}_U$ is dense. Let $g$ be an arbitrary smooth Riemannian metric on $M$ and $\mathcal{V}$ be  an arbitrary neighborhood of $g$ in the $C^\infty$ topology.  By the Bumpy Metrics Theorem of White (Theorem 2.1, \cite{white}), there exists $g'\in \mathcal{V}$ such that every  closed, smooth immersed minimal hypersurface with respect to $g'$ is nondegenerate. If one of these minimal hypersurfaces is embedded and intersects $U$ then $g'\in \mathcal{M}_U$, and we are done.

Hence we can suppose that every closed, smooth, embedded minimal hypersurface with respect to $g'$ is contained in the complement of $U$. Since $g'$ is bumpy, it follows from Sharp (Theorem 2.3 and Remark 2.4, \cite{sharp}) that the set of connected, closed, smooth, embedded minimal hypersurfaces in $(M,g')$ with both area and index bounded from above by $q$ is finite for every $q>0$. Therefore the set
\begin{eqnarray*}
&&\mathcal{C} = \{\sum_{j=1}^N m_j {\rm vol}_{g'}(\Sigma_j): N \in \mathbb{N}, \{m_j\}_{j=1}^N \subset \mathbb{N}, \{\Sigma_j\}_{j=1}^N {\rm  \, disjoint\, collection\, }\\
&& \hspace{1.5cm} {\rm of\, closed,\, smooth,\, embedded\, minimal\, hypersurfaces\, in\,} (M,g')\}
\end{eqnarray*}
is countable.

Choose $h:M \rightarrow \mathbb{R}$ a smooth nonnegative function such that ${\rm supp\,}(h) \subset U$ and $h(x)>0$ for some $x\in U$. Define $g'(t) = (1+th)g'$ for $t\geq 0$, and let $t_0>0$ be sufficiently small so that $g'(t)\in \mathcal{V}$ for every $t\in [0,t_0]$. Notice that $g'(t)=g'$ outside some compact set $K\subset U$ for every $t>0$.

We have ${\rm vol}(M,g'(t_0))> {\rm vol}(M,g')$. It follows from the Weyl Law for the Volume Spectrum (see Introduction) that there exists $k\in \mathbb{N}$ such that $\omega_k(M,g'(t_0))>\omega_k(M,g')$. Assume by contradiction that  for every $t\in [0,t_0]$, every closed, smooth, embedded minimal hypersurface in
$(M,g'(t))$ is contained in $M \setminus U$.  Since $g'(t)=g'$ outside $K \subset U$ we conclude from Proposition \ref{invariant.can.be.realized} that $\omega_k(M,g'(t)) \in \mathcal{C}$ for all $t\in [0,t_0]$. But $\mathcal{C}$ is countable and we know from Proposition \ref{continuity} that the function $t \mapsto \omega_k(M,g'(t))$ is continuous. Hence $t \mapsto \omega_k(M,g'(t))$ is constant in the interval $[0,t_0]$. This contradicts the fact that $\omega_k(M,g'(t_0))>\omega_k(M,g')$.

Therefore we can find $t \in [0,t_0]$ such that there exists a closed, smooth, embedded minimal hypersurface $\Sigma$ with respect to $g'(t)$ that intersects $U$. Since $g'(t) \in \mathcal{V}$, Proposition \ref{perturbation} implies there exists a Riemannian metric $g''\in \mathcal{V}$ such that $\Sigma$ is minimal and nondegenerate with respect to $g''$. Therefore $g''\in \mathcal{V} \cap \mathcal{M}_U$ and we have finished the proof of the Proposition.

\end{proof} 

\begin{proof}[Proof of the Main Theorem]
Let $\{U_i\}$ be a countable basis of $M$. Since,  by Proposition \ref{open.dense}, each $\mathcal{M}_{U_i}$ is open and dense in $\mathcal{M}$ the set
$\cap_i \mathcal{M}_{U_i}$ is $C^\infty$ Baire-generic in $\mathcal{M}$. This  finishes the proof.
\end{proof}

\bibliographystyle{amsbook}

\end{document}